\newtheorem{thm}{Theorem}[section]
\newtheorem{lem}[thm]{Lemma}
\theoremstyle{definition}
\newtheorem{defn}[thm]{Definition}
\theoremstyle{remark}
\newtheorem{rem}[thm]{Remark}
\numberwithin{equation}{section}
\newcommand{\norm}[1]{\left\Vert#1\right\Vert}
\newcommand{\abs}[1]{\left\vert#1\right\vert}
\newcommand{\R}{\mathbb R}
\newcommand{\C}{\mathbb C}
\def\rp#1{\frac 1 {#1}}
\def\normQR#1#2#3{\norm{#1}_{L_t^{#2}L_x^{#3}}}
\def\normP#1#2{\norm{#1}_{L^{#2}_x}}
\def\normH#1#2{\norm{#1}_{\mathcal{H}^{#2}}}
\def\normHt#1#2{\norm{#1}_{{H}^{#2}}}
\def\Int{\int_{-\infty}^{\infty} }
\def\inT{\int_{0}^{t}}
\def\pair{(q,r)} \def\pairt{(\tilde q, \tilde r)}
\begin{document}

\title{Inhomogeneous Strichartz estimates with spherical symmetry and applications to the Dirac-Klein-Gordon system in two space dimensions.}

\author{Evgeni Ovcharov}
\address{University of Edinburgh, School of Mathematics, JCMB, King's Buildings, Edinburgh EH9 3JZ, UK}
\email{E.Y.Ovcharov@sms.ed.ac.uk}

\subjclass[2000]{Primary: 35B45, 35Q55}
\keywords{Dirac, Dirac-Klein-Gordon, spherically symmetric, radial, inhomogeneous Strichartz estimates}

\date{30 March 2009}
\begin{abstract} In this {\bf draft version} we prove inhomogeneous Strichartz estimates with spherical symmetry in the abstract setting via duality arguments. Then we derive some new explicit estimates in the context of the wave equation. This allows us to prove global existence of spherically symmetric solutions to the Dirac-Klein-Gordon (DKG) system in two space dimensions.
\end{abstract}

\maketitle
\pagestyle{myheadings}
\markboth{EVGENI OVCHAROV}{STRICHARTZ ESTIMATES WITH SPHERICAL SYMMETRY AND THE DKG SYSTEM}

\section{Introduction}
The main motivation behind this paper is the question of the global well-posedness of the DKG system in dimensions higher than one. This is a relativistic field model that describes nuclear interactions of subatomic particles and plays an important role in the relativistic quantum electrodynamics, see \cite{BD}. The system generates a significant mathematical interest too. Mathematically, its main feature is: a system for two quantities where there is an a priori bound for only one of the two in the $L^2$-class and no positive definite energy, but at the same time a presence of a special null-form structure in both nonlinearities allowing the system to be studied at very low regularities, see \cite{GS}, \cite{DFS} \cite{Se2}, \cite{Pe}, \cite{Ma}, \cite{DFS2}.

Strichartz estimates with spherical symmetry have attracted a lot of interest recently. The gain of regularity of these estimates over the standard Strichartz estimates varies with the equation but, for example, in the context of the wave equation this gain is significant. Most attention has been dedicated to the homogeneous setting, see Sterbenz \cite{Sz}, Fang and Wang \cite{FW}, Hidano and Kurokawa \cite{HK}, Machihara et al \cite{MNNO}, Tao \cite{T2}, and Vilela \cite{V2}, with only a few special inhomogeneous estimates being proved. Below we address this issue by making use of duality arguments to show that every homogeneous Strichartz estimate with spherical symmetry has its dual counterpart which is an inhomogeneous Strichartz estimate with spherical symmetry much in the same way as with the standard Strichartz estimates.

The main idea of our proof is to identify the class of $L^p$-functions on $\R^n$ with spherical symmetry with the class of $L^p$-functions on $[0,\infty)$ with the weighted measure $\rho^{n-1}d\rho$. This allows us to use duality and to proceed to a large extent as in the standard case.

Once we get our Strichartz estimates the main challenge will be to come up with the correct definition of spherical symmetry for spinors. It is well-known that the Dirac operator does not preserve spherical symmetry, at least not in the way one expects if one takes the erroneous attitude to treat spinors as normal functions. Thus, we investigate the action of rotations on spinor-space and 
define spherical symmetry for spinors to be the invariance with respect to that action. However, we do not know whether this definition has been used in the physics literature before.

\section{Preliminaries}

We shall make use of the following two results in the sequel.

\begin{lem}[Christ-Kiselev, see lemma 3.1 of \cite{T2}, or \cite{T}] \label{lem: Christ-Kiselev}
Suppose that the integral operator
\begin{equation} \label{eq: Boch int}
  T[F](t)=\int_{-\infty}^{\infty} K(t,s)F(s)ds
\end{equation}
is bounded from $L^p(\R;\mathcal B_1)$ to $L^q(\R;\mathcal B_2)$ for some Banach spaces ${\mathcal B}_1$, ${\mathcal B}_2$ and $1\leq p< q\leq \infty$. The operator-valued kernel $K(t,s)$ maps ${\mathcal B}_1$ to ${\mathcal B}_2$ for all $t,s \in \R$. Assume also that $K$ is regular enough to ensure that (\ref{eq: Boch int}) makes sense as a ${\mathcal B}_2$-valued Bochner integral for almost all $t \in \R$. Then the operator
\[
\tilde{T}[F](t)=\int_{-\infty}^{t} K(t,s)F(s)ds
\]
is also bounded on the same spaces.
\end{lem}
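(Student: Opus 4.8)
The plan is to realise the retarded operator $\tilde T$ as a sum of pieces of the full operator $T$, indexed by a dyadic decomposition of the half-space $\{(t,s):s<t\}$ adapted to the distribution of the $L^p$-mass of $F$, and then to sum geometrically — the convergence of the resulting series being exactly where the hypothesis $p<q$ enters.

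First I would make a routine reduction: by linearity and homogeneity it suffices to treat $F$ with $\|F\|_{L^p(\R;\mathcal B_1)}=1$, and by density (together with the regularity hypothesis on $K$, which guarantees that everything below is measurable and that the Bochner integrals converge) one may assume $F$ is a nice $\mathcal B_1$-valued function, say continuous, compactly supported and nowhere vanishing. Let $d\mu(s)=\|F(s)\|_{\mathcal B_1}^p\,ds$, a Borel probability measure on $\R$ whose cumulative distribution function is continuous and strictly increasing. For each $j\ge 0$ split $\R$ into consecutive intervals $E_j^1<E_j^2<\dots<E_j^{2^j}$ with $\mu(E_j^m)=2^{-j}$, chosen compatibly under refinement so that $E_j^m=E_{j+1}^{2m-1}\cup E_{j+1}^{2m}$. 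The combinatorial core is the a.e.\ identity
\[
\chi_{\{s<t\}}(t,s)=\sum_{j\ge 1}\sum_{m=1}^{2^{j-1}}\chi_{E_j^{2m}}(t)\,\chi_{E_j^{2m-1}}(s):
\]
given $s<t$, there is a first scale $j^\ast$ at which $s$ and $t$ fall into different intervals; at that scale they lie in the two halves of a common parent, and $s<t$ forces $s$ into the left half, which selects exactly one term on the right, while a short check shows no other term contains $(t,s)$.

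Feeding this into $\tilde T[F](t)=\int_{s<t}K(t,s)F(s)\,ds$ gives $\tilde T[F]=\sum_{j\ge 1}T_j[F]$ with $T_j[F](t)=\sum_{m=1}^{2^{j-1}}\chi_{E_j^{2m}}(t)\,T[\chi_{E_j^{2m-1}}F](t)$. For fixed $j$ the sets $E_j^{2m}$ are pairwise disjoint, so the summands of $T_j$ have disjoint supports in $t$, and using the assumed boundedness of $T$ together with $\|\chi_{E_j^{2m-1}}F\|_{L^p}^p=\mu(E_j^{2m-1})=2^{-j}$,
\[
\|T_j[F]\|_{L^q}^q=\sum_{m=1}^{2^{j-1}}\|\chi_{E_j^{2m}}T[\chi_{E_j^{2m-1}}F]\|_{L^q}^q\le\|T\|^q\sum_{m=1}^{2^{j-1}}\|\chi_{E_j^{2m-1}}F\|_{L^p}^q=\|T\|^q\,2^{j-1}\,2^{-jq/p}.
\]
Hence $\|T_j[F]\|_{L^q}\le C\,\|T\|\,2^{j(1/q-1/p)}$, and since $1/q-1/p<0$ the series $\sum_j\|T_j[F]\|_{L^q}$ converges, so $\|\tilde T[F]\|_{L^q}\le C\,\|T\|\,\|F\|_{L^p}$, which is the claim after undoing the normalisation.

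I expect the main obstacle to be the bookkeeping around the dyadic decomposition rather than the estimate itself: ensuring the intervals $E_j^m$ and the rectangles they generate are honest measurable sets (cleanly handled once $F$ has been reduced to a nice, nowhere-vanishing function), checking the partition identity down to null sets, and justifying the interchange of the sum over $j$ with the Bochner integral — all of which are secured by the reduction step and the regularity assumption on $K$ in the statement. Note finally that $p<q$ is used in an essential way: for $p=q$ the geometric series diverges, and the conclusion genuinely fails.
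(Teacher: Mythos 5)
The paper does not actually prove this lemma --- it is quoted with a citation to Tao (lemma 3.1 of \cite{T2}) and to Christ--Kiselev, so there is no in-paper argument to compare against. Your proof is a correct reconstruction of the standard argument used in those references: a Whitney-type dyadic partition of the triangle $\{s<t\}$ adapted to the distribution of $\|F(s)\|_{\mathcal B_1}^p$, disjointness in $t$ of the blocks at each fixed scale $j$ (so the $L^q$-norms of the pieces add $\ell^q$-wise), and geometric summation over $j$, with the hypothesis $p<q$ entering exactly through the convergence of $\sum_j 2^{j(1/q-1/p)}$. Two small points you should tidy when writing this out in full. First, the display $\|T_j F\|_{L^q}^q=\sum_m\|\chi_{E_j^{2m}}T[\chi_{E_j^{2m-1}}F]\|_{L^q}^q$ uses $q<\infty$; for $q=\infty$ one replaces it by the trivial variant $\|T_j F\|_{L^\infty}\le\|T\|\sup_m\|\chi_{E_j^{2m-1}}F\|_{L^p}=\|T\|\,2^{-j/p}$, which still sums since $p<q=\infty$ forces $p<\infty$. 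Second, a continuous, compactly supported $F$ cannot be nowhere vanishing on $\R$; the correct reduction is only that the cumulative distribution $s\mapsto\mu((-\infty,s])$ be a continuous surjection onto $[0,1]$ (automatic for continuous compactly supported $F$ since $\mu$ has no atoms), and one then picks the partition points arbitrarily wherever this function is flat. Flat stretches carry no $\mu$-mass, so the partition identity holds for $\mu\otimes ds$-a.e.\ $(s,t)$, which is all the integral against $F(s)\,ds$ ever sees. With these adjustments the argument is complete and is indeed the proof the paper implicitly relies on.
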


\begin{thm}[D'Ancona, Foschi, Selberg \cite{DFS}] \label{thm: DKG}
Consider the IVP for the DKG system \eqref{eq: DKG 1}, \eqref{eq: DKG 2}  
for initial data in the class  $\psi_{|t=0} =\psi_0 \in L^2$, $\phi_{|t=0}=\phi_0 \in H^r$ and  $\partial_t \phi_{|t=0} =\phi_1 \in H^{r-1}$, where $1/4 < r< 3/4$. Then there exist a time $T>0$, depending continuously on the $L^2 \times H^r \times H^{r-1}$-norm of the data, and a solution
\[
  \psi \in C([0, T]; H^s), \quad  \phi \in C([0, T ]; H^r) \cap C^1([0, T]; H^{r-1}),
\]
of the DKG system \eqref{eq: DKG 1}, \eqref{eq: DKG 2}  on $(0, T ) \times \R^2$, satisfying the initial condition above. Moreover, the solution is unique in this class, and depends continuously on the data.
\end{thm}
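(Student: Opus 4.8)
The plan is to reduce the system to a coupled family of half-wave equations carrying a hidden null structure and then to close a contraction mapping in wave-Sobolev spaces; this follows the strategy of \cite{DFS}, which I only sketch. First I would diagonalise the Dirac operator: writing $D=-i\nabla$ and $\langle D\rangle_M=\sqrt{\abs{D}^2+M^2}$, let $\Pi_\pm(D)=\tfrac12\bigl(I\pm\langle D\rangle_M^{-1}(\alpha\cdot D+M\beta)\bigr)$ be its spectral projections and split $\psi=\psi_++\psi_-$ with $\psi_\pm=\Pi_\pm(D)\psi$. Each component then solves a first-order equation $(i\partial_t\pm\langle D\rangle_M)\psi_\pm=\Pi_\pm(D)\,\mathcal N_1$, with $\mathcal N_1=\phi\,\beta\,\psi$ the Dirac nonlinearity, while the Klein-Gordon equation keeps the form $(\Box+m^2)\phi=\mathcal N_2$, $\mathcal N_2=\langle\beta\,\psi,\psi\rangle$. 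Passing to Duhamel form, I would set up the iteration for $(\psi_+,\psi_-,\phi,\partial_t\phi)$ in
\[
X^{0,b}_+\times X^{0,b}_-\times X^{r,b}\times X^{r-1,b},
\]
the wave-Sobolev (Bourgain) spaces adapted to the operators $i\partial_t\pm\langle D\rangle_M$ and to $\Box+m^2$, with $b$ slightly larger than $1/2$ and everything localised to a slab $[0,T]\times\R^2$. The contributions of the free evolutions and of the linear Duhamel operators are controlled by the energy inequality together with the embeddings $X^{s,b}\hookrightarrow C_tH^s$ and the inhomogeneous $X^{s,b}$ bounds, the restriction to $[0,T]$ supplying a small factor $T^\varepsilon$.

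The heart of the matter is the pair of bilinear estimates
\[
\normNorm{\Pi_{\pm_0}(D)\bigl(\phi\,\beta\,\psi_{\pm_1}\bigr)}_{X^{0,b-1+\varepsilon}_{\pm_0}}\lesssim\normNorm{\phi}_{X^{r,b}}\,\normNorm{\psi_{\pm_1}}_{X^{0,b}_{\pm_1}}
\]
and
\[
\normNorm{\langle\beta\,\psi_{\pm_1},\psi_{\pm_2}\rangle}_{X^{r-1,b-1+\varepsilon}}\lesssim\normNorm{\psi_{\pm_1}}_{X^{0,b}_{\pm_1}}\,\normNorm{\psi_{\pm_2}}_{X^{0,b}_{\pm_2}},
\]
required uniformly in the signs and for every $r\in(1/4,3/4)$. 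In two space dimensions neither of these follows from Sobolev embedding or Strichartz estimates applied to the product directly; the decisive gain comes from the null structure, which after the $\Pi_\pm$-decomposition is visible in the symbol bound $\abs{\Pi_{\pm_1}(\xi)\,\beta\,\Pi_{\pm_2}(\eta)}\lesssim\angle(\pm_1\xi,\pm_2\eta)$ for unit vectors $\xi,\eta$ (up to lower-order terms coming from the masses), and in an analogous bound for the matrix structure of $\mathcal N_1$. I would prove the two estimates by decomposing the inputs dyadically and simultaneously in spatial frequency and in modulation, use the null-form factor to trade the angular separation of the interacting frequencies for a quantitative gain, and then reduce, via the transfer principle, to bilinear $L^2_{t,x}(\R^{1+2})$ estimates for products of frequency- and angularly-localised free half-wave solutions. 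Those I would establish in the spirit of the Klainerman-Machedon and Foschi-Klainerman bilinear estimates and of the wave-Sobolev product atlas: either through the $L^4$ Strichartz inequality for the two-dimensional wave equation combined with almost orthogonality over angular sectors, or by a direct estimate of the measure of the overlap of the two characteristic supports.

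Granting these two estimates, a standard fixed-point argument on a small ball of the iteration space produces, for a time $T$ depending only on $\normNorm{\psi_0}_{L^2}+\normNorm{\phi_0}_{H^r}+\normNorm{\phi_1}_{H^{r-1}}$, a unique solution; the same multilinear bounds then upgrade the iterates to give $\psi\in C([0,T];L^2)$, $\phi\in C([0,T];H^r)\cap C^1([0,T];H^{r-1})$, and yield Lipschitz dependence on the data via a difference argument exploiting the multilinearity of the nonlinearities. I expect the main obstacle to be precisely these bilinear estimates, and within them the most resonant frequency interactions -- the high$\times$high$\to$low and low$\times$high configurations with small output modulation -- where the estimates are essentially sharp: there one must use the full angular gain and carry out the dyadic summation carefully in order not to lose a logarithm, and it is there that the lower threshold $r>1/4$ is forced.
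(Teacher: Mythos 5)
This theorem is stated in the paper as a black-box result imported from D'Ancona--Foschi--Selberg \cite{DFS}; the paper supplies no proof of its own, so there is nothing internal to compare against. Your sketch is a faithful reconstruction of the strategy in \cite{DFS}: the spectral decomposition $\psi=\Pi_+(D)\psi+\Pi_-(D)\psi$ reducing the Dirac part to coupled half-Klein--Gordon evolutions, iteration in wave-Sobolev/Bourgain spaces on a time slab, the pair of bilinear null-form estimates driven by the angular decay of the symbol $\Pi_{\pm_1}\beta\,\Pi_{\pm_2}$, and closure by contraction, which is essentially the same approach.
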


\section{Inhomogeneous Strichartz estimates with spherical symmetry}

To every spherically symmetric function $f(x) \in L^p(\R^n)$ we map a function $f_\rho(\rho) \in L^p([0, \infty); \rho^{n-1}d\rho)$ by the rule $f_\rho(\rho) = f(\rho, 0,\dots,0)$. This mapping is a one-to-one isometry. The inverse mapping is defined by the rule $g_x(x)=g(\abs{x})$ where $g(\rho) \in L^p([0, \infty); \rho^{n-1}d\rho)$ and obviously we have $(g_x)_\rho=g(\rho)$ and $(f_\rho)_x=f(x)$. Note that the dual space to $L^p([0, \infty); \rho^{n-1}d\rho)$ is the space $L^{p'}([0, \infty); \rho^{n-1}d\rho)$, where $1\leq p < \infty$, and $p$ and $p'$ are H\"older conjugate.

Suppose now that $\mathcal{H}(\R^n)$ is a Hilbert space of functions on $\R^n$ on which space rotations act as unitary operators. Examples of such include the Sobolev spaces $H^s(\R^n)$ for any $s\in \R$. The class of spherically symmetric functions in $\mathcal{H}(\R^n)$ is a Hilbert space, too, which we shall identify with the Hilbert space  $\mathcal{H}_\rho([0,\infty))$ of functions on $[0, \infty)$ with a scalar product
\[
\langle f,g \rangle_{\mathcal{H}_\rho([0,\infty))} = \langle f_x,g_x \rangle_{\mathcal{H}(\R^n)}.
\]

\begin{lem} Suppose that a linear continuous operator $U(t): \mathcal{H}(\R^n) \rightarrow L^2(\R^n)$ commutes with rotations, i.e. U(t)[f(Rx)]=U(t)[f](Rx), where $R$ denotes a space rotation on $\R^n$. Then its dual $U^*(t): L^2(\R^n) \rightarrow \mathcal{H}(\R^n)$ does too.
\end{lem}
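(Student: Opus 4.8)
The plan is to verify the rotation-invariance of $U^*(t)$ directly from the definition of the dual operator together with the fact that rotations are unitary on both $\mathcal{H}(\R^n)$ and $L^2(\R^n)$. For a fixed rotation $R$, write $\pi_R$ for the unitary operator $f \mapsto f(R\,\cdot)$ on $\mathcal{H}(\R^n)$, and likewise for the analogous unitary operator on $L^2(\R^n)$, which I will also denote $\pi_R$ (the change of variables $x \mapsto Rx$ has Jacobian $1$, so it is unitary on $L^2$). The hypothesis says exactly that $U(t)\circ \pi_R = \pi_R \circ U(t)$ as maps $\mathcal{H}(\R^n)\to L^2(\R^n)$.

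First I would record the two basic facts about duals that the argument rests on: for any bounded operator $A$ between Hilbert spaces one has $(A\circ B)^* = B^* \circ A^*$, and for a unitary operator $\pi$ one has $\pi^* = \pi^{-1}$. Moreover $\pi_R^{-1} = \pi_{R^{-1}}$ on each space. Taking adjoints in the intertwining identity $U(t)\,\pi_R = \pi_R\, U(t)$ gives $\pi_R^*\, U^*(t) = U^*(t)\, \pi_R^*$, i.e. $\pi_{R^{-1}}\, U^*(t) = U^*(t)\, \pi_{R^{-1}}$ as operators $L^2(\R^n)\to \mathcal{H}(\R^n)$. Since $R$ was arbitrary and $R\mapsto R^{-1}$ is a bijection of the rotation group, this says precisely that $U^*(t)$ commutes with every rotation; unwinding the notation, $U^*(t)[g(Rx)] = U^*(t)[g](Rx)$ for all $g\in L^2(\R^n)$ and all rotations $R$, which is the claim.

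The only genuinely substantive point — and the one I would state carefully rather than the chain of adjoint identities, which is routine — is that the adjoint of $U(t)$ is taken with respect to the Hilbert space structure of $\mathcal{H}(\R^n)$ (not a naive $L^2$ pairing), and that this is consistent: one needs that the $L^2$-adjoint and the $\mathcal{H}$-adjoint interact correctly, or rather one simply fixes once and for all the pairing $\langle U(t) f, g\rangle_{L^2} = \langle f, U^*(t) g\rangle_{\mathcal{H}}$ defining $U^*(t)$, and observes that the rotation operators are unitary for \emph{both} inner products appearing in this identity — unitarity on $L^2$ handles the left side, unitarity on $\mathcal{H}$ handles the right side. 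Concretely, replacing $f$ by $\pi_R f$ and using $U(t)\pi_R f = \pi_R U(t) f$ together with unitarity of $\pi_R$ on $L^2$:
\[
\langle f, U^*(t)g\rangle_{\mathcal{H}} = \langle U(t)f, g\rangle_{L^2} = \langle \pi_R U(t)f, \pi_R g\rangle_{L^2} = \langle U(t)\pi_R f, \pi_R g\rangle_{L^2} = \langle \pi_R f, U^*(t)\pi_R g\rangle_{\mathcal{H}},
\]
and then applying unitarity of $\pi_R$ on $\mathcal{H}$ to the outer terms yields $\langle f, U^*(t)g\rangle_{\mathcal{H}} = \langle f, \pi_R^{-1} U^*(t)\pi_R g\rangle_{\mathcal{H}}$ for all $f$, hence $U^*(t)\pi_R = \pi_R U^*(t)$. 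I expect no real obstacle here; the work is entirely in keeping the two inner products straight, and in noting that $\pi_R$ being unitary on $\mathcal{H}(\R^n)$ is part of the standing hypothesis on $\mathcal{H}$ (it holds for the Sobolev spaces $H^s$, the intended examples). A harmless consequence worth flagging in passing is that $U^*(t)$ therefore maps spherically symmetric functions in $L^2(\R^n)$ into spherically symmetric functions in $\mathcal{H}(\R^n)$, so it descends to an operator $L^2_\rho([0,\infty);\rho^{n-1}d\rho) \to \mathcal{H}_\rho([0,\infty))$ in the identification set up above — which is the form in which the lemma will actually be used.
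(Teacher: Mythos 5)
Your argument is correct, and it is the natural one: read the identity $\langle U(t)f,g\rangle_{L^2}=\langle f,U^*(t)g\rangle_{\mathcal{H}}$ against the unitarity of $\pi_R$ on each of the two spaces, insert the intertwining hypothesis on $U(t)$, and conclude that $U^*(t)$ commutes with $\pi_R$. The paper itself offers no proof of this lemma at all --- it is stated bare, with the next proof environment appearing only for the subsequent lemma about $(U^*(t))_\rho$ --- so there is nothing to compare against; your write-up simply supplies the omitted (and, as the author evidently judged, routine) verification. You are also right to flag where the weight actually sits: the standing hypothesis that rotations act unitarily on $\mathcal{H}(\R^n)$ is what makes the chain of equalities close up, and without it the lemma would fail. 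The closing observation, that $U^*(t)$ therefore preserves spherical symmetry and descends to the $\rho$-picture, is exactly the form in which the lemma is put to work in the definition of $U_\rho(t)$ and in the subsequent duality computation, so it is a useful remark rather than a digression.
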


\begin{defn} Let $U(t): \mathcal{H}(\R^n) \rightarrow L^2(\R^n)$ be a linear continuous operator that commutes with space rotations. We define the linear continuous operator $U_\rho(t): \mathcal{H_\rho}([0,\infty)) \rightarrow L^2([0,\infty);\rho^{n-1}d\rho)$ by the rule $U_\rho(t)f=(U(t)f_x)\rho$ for every $f \in \mathcal{H_\rho}([0,\infty))$.
\end{defn}

\begin{lem}
Let $U(t): \mathcal{H}(\R^n) \rightarrow L^2(\R^n)$ be a linear continuous operator and let
$U^*(t): L^2(\R^n) \rightarrow \mathcal{H}(\R^n) $ be its dual. Then the dual to $U_\rho(t): \mathcal{H_\rho}([0,\infty)) \rightarrow L^2([0,\infty);\rho^{n-1}d\rho)$ is the operator
$(U^*(t))_\rho: \mathcal{H_\rho}([0,\infty)) \rightarrow L^2([0,\infty);\rho^{n-1}d\rho)$
\end{lem}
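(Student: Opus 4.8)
The plan is to verify the defining identity of the adjoint directly, namely that $\langle U_\rho(t) f, g\rangle_{L^2([0,\infty);\rho^{n-1}d\rho)} = \langle f, (U^*(t))_\rho g\rangle_{\mathcal{H}_\rho([0,\infty))}$ for all $f,g \in \mathcal{H}_\rho([0,\infty))$, where on the right the pairing is in $\mathcal{H}_\rho$ and on the left in $L^2$ with the weighted measure. First I would unwind the definitions on the left-hand side: $U_\rho(t) f = (U(t) f_x)_\rho$ by the preceding definition, so $\langle U_\rho(t) f, g\rangle_{L^2(\rho^{n-1}d\rho)} = \langle (U(t)f_x)_\rho, g\rangle_{L^2(\rho^{n-1}d\rho)}$. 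Since $f \mapsto f_\rho$ is a one-to-one isometry between the spherically symmetric functions in $L^2(\R^n)$ and $L^2([0,\infty);\rho^{n-1}d\rho)$, and since $U(t)f_x$ is spherically symmetric (as $U(t)$ commutes with rotations and $f_x$ is radial), this equals $\langle U(t)f_x, g_x\rangle_{L^2(\R^n)}$.

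Next I would apply the definition of the dual operator $U^*(t)$ on $\R^n$: this last quantity is $\langle f_x, U^*(t) g_x\rangle_{\mathcal{H}(\R^n)}$. Here one must be careful that $U^*(t) g_x$ is again spherically symmetric; this is exactly the content of the earlier lemma stating that the dual of a rotation-commuting operator commutes with rotations, applied to the radial function $g_x$. Therefore $U^*(t) g_x = ((U^*(t) g_x)_\rho)_x = ((U^*(t))_\rho g)_x$ by the definition of $(U^*(t))_\rho$. Plugging this in, and using the very definition of the scalar product on $\mathcal{H}_\rho([0,\infty))$, namely $\langle h,k\rangle_{\mathcal{H}_\rho([0,\infty))} = \langle h_x, k_x\rangle_{\mathcal{H}(\R^n)}$, we get $\langle f_x, U^*(t) g_x\rangle_{\mathcal{H}(\R^n)} = \langle f, (U^*(t))_\rho g\rangle_{\mathcal{H}_\rho([0,\infty))}$, which closes the chain of equalities and identifies $(U^*(t))_\rho$ as the dual of $U_\rho(t)$. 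One should also remark that $(U^*(t))_\rho$ is well-defined and continuous as an operator $L^2([0,\infty);\rho^{n-1}d\rho) \to \mathcal{H}_\rho([0,\infty))$ precisely because $U^*(t)$ commutes with rotations, so that it maps radial functions to radial functions, making the construction of the preceding definition applicable (with the roles of the Hilbert space and $L^2$ interchanged).

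The only genuine subtlety — and the step I expect to require the most care rather than the most difficulty — is the bookkeeping of which functions are known to be spherically symmetric at each stage, so that the isometry $f \leftrightarrow f_\rho$ and the scalar product identity on $\mathcal{H}_\rho$ may legitimately be invoked. Concretely, one needs: $f_x, g_x$ radial by construction; $U(t)f_x$ radial by the rotation-commutation of $U(t)$; and $U^*(t)g_x$ radial by the rotation-commutation of $U^*(t)$, which is the first lemma of this section. Once these three facts are in place the proof is a short formal computation, so I would present it as a displayed chain of equalities with a one-line justification annotating each step, and append the well-definedness remark as a final sentence.
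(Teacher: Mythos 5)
Your proof follows the same short chain of equalities as the paper's, with somewhat more careful bookkeeping: you explicitly verify that $U(t)f_x$ and $U^*(t)g_x$ are spherically symmetric (via the rotation--commutation lemma) before transferring the pairings between $\R^n$ and $[0,\infty)$, and you flag that this is what makes $(U^*(t))_\rho$ well defined as a map $L^2([0,\infty);\rho^{n-1}d\rho)\to\mathcal H_\rho$ --- steps the paper leaves tacit. The only divergence is cosmetic: the paper introduces the normalization constant $\rp{\omega_n}$ in the first two terms of its chain and then silently drops it in the last, whereas you treat the radial identification as a genuine isometry of $L^2$-spaces and carry no constant at all; either convention is harmless for the conclusion, but yours is internally consistent.
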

\begin{proof}
\begin{equation}
 \begin{split}
  \langle U_{\rho}(t)f, g \rangle_{L^2([0,\infty);\rho^{n-1}d\rho)} = \rp {\omega_n}
  \langle U(t)f_x, g_x \rangle_{L^2(\R^n)}=\\
  \rp {\omega_n} \langle f_x, U^*(t)g_x \rangle_{\mathcal{H}(\R^n)}=
  \langle f, (U^*(t)g)_{\rho} \rangle_{\mathcal{H_\rho}([0,\infty)},
 \end{split}
\end{equation}
where $\omega_n$ is the area of the unit sphere in $\R^n$.
\end{proof}

Suppose that we have the following estimates for $U(t)$
\begin{align}\label{est: hom}
  \normQR{U(t)f}{q}{r} \lesssim \normH{f}{s}
\end{align}
for all $f \in \mathcal{H}^s$ whenever $(q, r) \in A$, where $\mathcal{H}^s$, $s=s(q,r)$, are a collection of Hilbert spaces of functions on $\R^n$, $f \in \mathcal{H}^s$ is spherically symmetric, and $A$ is the index set of admissability for the estimate \eqref{est: hom}. We can express this more succinctly by saying that the operator
\[
 T:  \mathcal{H}_\rho^s \rightarrow L_t^qL_\rho^r, \qquad Tf=U_\rho(t)f,
\]
where $L_t^qL_\rho^r = L^q((0, \infty); L^r([0, \infty); \rho^{n-1}d\rho))$, is bounded whenever $(q, r) \in A$. Then by duality the operator
\[
  TT^*: L_t^{\tilde q'}L_\rho^{\tilde r'} \rightarrow L_t^qL_\rho^r, \qquad TT^*F=U_\rho(t)\Int U_\rho^*(s)F(s)ds
\]
is bounded too whenever $(q,r), (\tilde q, \tilde r) \in A$. Consider now the operator
\begin{align}\label{oper: W}
    W_\rho(t)F = U_\rho(t)\inT U_\rho^*(s)F(s)ds,
\end{align}
which due to Duhamel's formula expresses the solution to an inhomogeneous PDE whenever $U(t)$ is the linear continuous group associated with that equation and $F(t)$ is a spherically symmetric function with respect to the space variables.

\begin{thm}[Inhomogeneous Strichartz estimates with spherical symmetry] \label{thm: inhom abs}
Suppose that the homogenous Strichartz estimate \eqref{est: hom} holds for all spherically symmetric $f \in \mathcal{H}^s$ whenever $(q, r) \in A$. Then we have that the following inhomogeneous Strichartz estimate
\begin{align}\label{est: inhom}
    \normQR{U(t)\inT U^*(s)F(s)ds}{q}{r}  \lesssim \normQR{F}{\tilde q'}{\tilde r'}
\end{align}
holds for all spherically symmetric $F \in L_t^{\tilde q'}L_x^{\tilde r'}$, whenever  $(q,r), (\tilde q, \tilde r) \in A$ and $q > \tilde q'$ or $(q,r)=(\tilde q, \tilde r)$.
\end{thm}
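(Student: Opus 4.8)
The plan is to reduce the inhomogeneous estimate \eqref{est: inhom} to its ``radial'' reformulation and then apply the Christ--Kiselev lemma. First I would observe that, by the isometry identifying spherically symmetric $L^p(\R^n)$-functions with $L^p([0,\infty);\rho^{n-1}d\rho)$, the estimate \eqref{est: inhom} is \emph{equivalent} to the boundedness of $W_\rho(t)$ from $L_t^{\tilde q'}L_\rho^{\tilde r'}$ to $L_t^qL_\rho^r$, where $W_\rho$ is the operator in \eqref{oper: W}. Next I would record what the hypothesis \eqref{est: hom} gives us in this language: $T:\mathcal H_\rho^s\to L_t^qL_\rho^r$ is bounded for every $(q,r)\in A$, and its adjoint $T^*$ (computed in the preceding lemma as $(U^*(t))_\rho$, up to the harmless constant $\omega_n$) is bounded from $L_t^{\tilde q'}L_\rho^{\tilde r'}$ to $\mathcal H_\rho^{\tilde s}$ for every $(\tilde q,\tilde r)\in A$. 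Composing, the ``full'' operator $TT^*F = U_\rho(t)\int_{-\infty}^\infty U_\rho^*(s)F(s)\,ds$ is bounded from $L_t^{\tilde q'}L_\rho^{\tilde r'}$ to $L_t^qL_\rho^r$ whenever both $(q,r),(\tilde q,\tilde r)\in A$.

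The truncated operator $W_\rho(t)$ differs from $TT^*$ only in that the time integral runs over $(0,t)$ rather than $(-\infty,\infty)$. This is exactly the situation of Lemma~\ref{lem: Christ-Kiselev}: take $\mathcal B_1=L_\rho^{\tilde r'}=L^{\tilde r'}([0,\infty);\rho^{n-1}d\rho)$, $\mathcal B_2=L_\rho^r=L^r([0,\infty);\rho^{n-1}d\rho)$, let the operator-valued kernel be $K(t,s)=U_\rho(t)U_\rho^*(s)$, and note that $TT^*$ plays the role of the bounded operator $T[F]$ in \eqref{eq: Boch int}, with exponents $p=\tilde q'$ and $q$ (the target time-exponent) in the roles of $p<q$ there. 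The hypothesis $q>\tilde q'$ supplies precisely the strict inequality $p<q$ required by the lemma, and one also needs the mild regularity of $K$ ensuring the Bochner integral makes sense almost everywhere; this follows from the strong continuity of the underlying propagator $U(t)$, so I would just remark on it. The lemma then yields boundedness of $\tilde T[F](t)=\int_{-\infty}^t K(t,s)F(s)\,ds = W_\rho(t)F$ on the same spaces, which is the desired estimate. In the remaining diagonal case $(q,r)=(\tilde q,\tilde r)$ the Christ--Kiselev lemma does not apply (no strict inequality), but then $W_\rho(t)$ is handled directly: $TT^*$ is bounded, and the truncated operator is controlled by the standard argument that $TT^*$ and its truncation coincide up to the genuinely self-adjoint ``energy'' piece, so that $\|W_\rho F\|_{L_t^qL_\rho^r}\lesssim \|F\|_{L_t^{q'}L_\rho^{r'}}$ follows from $\|T^*F\|_{\mathcal H_\rho^s}\lesssim\|F\|_{L_t^{q'}L_\rho^{r'}}$ together with $\|Tf\|_{L_t^qL_\rho^r}\lesssim\|f\|_{\mathcal H_\rho^s}$ and the retarded-to-full comparison.

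The main obstacle I expect is not any single hard estimate but rather the bookkeeping: one must check carefully that passing back and forth through the isometry $f\leftrightarrow f_\rho$ is legitimate at the level of the full composed operator (the adjoint $T^*$ was only identified as $(U^*)_\rho$ for a \emph{single} time $t$, so one has to make sure the vector-valued / space-time version is consistent, including the constant $\omega_n$), and that $F\in L_t^{\tilde q'}L_x^{\tilde r'}$ spherically symmetric really does correspond to an element of $L_t^{\tilde q'}L_\rho^{\tilde r'}$ with the Bochner-measurability needed for Lemma~\ref{lem: Christ-Kiselev}. Once that identification is in place, the proof is a direct citation of the $TT^*$ argument followed by Christ--Kiselev, exactly parallel to the non-radial theory. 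I would therefore structure the write-up as: (i) restate the hypothesis as boundedness of $T$ and hence of $T^*$; (ii) deduce boundedness of $TT^*$; (iii) invoke Lemma~\ref{lem: Christ-Kiselev} with the stated Banach spaces to pass to $W_\rho$; (iv) translate back via the isometry to obtain \eqref{est: inhom}; (v) dispose of the diagonal case separately.
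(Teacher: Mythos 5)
Your route matches the paper's: the paper's own proof is a two-line sketch reading ``in the case when $q > \tilde q'$ we apply the Christ--Kiselev lemma to the $TT^*$-operator, otherwise we use symmetry considerations as in Keel and Tao,'' and your steps (i)--(iv) are exactly the $TT^*$ plus Christ--Kiselev argument carried out in the radial reformulation, with the bookkeeping about the isometry spelled out more carefully than in the paper. So for the case $q > \tilde q'$ your proposal is a faithful, more detailed version of the intended proof.

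The place where you diverge, and where there is a genuine gap, is step (v), the diagonal case $(q,r)=(\tilde q,\tilde r)$. First note that if $q>2$ this case is already subsumed in $q>\tilde q'$ (since then $\tilde q'=q'<2<q$), so the only new content of the second alternative is $q=\tilde q=2$, $r=\tilde r$, precisely where Christ--Kiselev breaks down. Your sketch — that $TT^*$ and its truncation ``coincide up to the genuinely self-adjoint energy piece'' and that a ``retarded-to-full comparison'' finishes the job — does not yield a proof. Writing the retarded bilinear form $B(F,G)=\iint_{s<t}\langle U^*(s)F(s),U^*(t)G(t)\rangle\,ds\,dt$, what the full $TT^*$ bound gives you is control of
\begin{equation*}
 B(F,F)+\overline{B(F,F)} \;=\; \Bigl\|\int U^*(s)F(s)\,ds\Bigr\|_{\mathcal H}^2 \;\lesssim\; \norm{F}^2,
\end{equation*}
i.e.\ only the \emph{real part} of $B(F,F)$. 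Since $B(F,F)$ is in general complex, $\mathrm{Re}\,B(F,F)\lesssim\norm{F}^2$ does not control $\abs{B(F,F)}$, so the retarded estimate does not follow from the full one by this comparison. The paper's pointer to ``symmetry considerations as in Keel and Tao'' refers to the bilinear decomposition arguments in \cite{KT} (dyadic decomposition of $\{s<t\}$ and real interpolation), which are a genuinely separate mechanism, not a corollary of the untruncated $TT^*$ bound. Your write-up should either reproduce that argument, cite it explicitly as the paper does, or at minimum note that the $q=\tilde q=2$ diagonal requires it and cannot be dispatched by the full-vs-retarded comparison you describe.
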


\begin{proof}
In the case when $q> \tilde q'$ we apply the Christ-Kiselev lemma to the $TT^*$-operator, otherwise we use symmetry considerations as in Keel and Tao \cite{KT}.
\end{proof}

\section{Strichartz estimates for the wave equation}

Define the operators
\begin{align*}
 \widehat {U_{\pm}(t)f} = e^{\pm i(t\abs{\xi})}\hat{f}(\xi), \\
 U_0(t)f = (U_{+}(t-s)+ U_{-}(t-s))/2,\\
 W_0(t)F = \inT \frac {U_{+}(t-s)- U_{-}(t-s)} {2iD} F(s) ds,  
\end{align*}
where the operator $D$ has a Fourier symbol $\abs{\xi}$. Note that $D$ commutes with rotations and thus preserves spherical symmetry.

Then the solution to the IVP for the wave equation
\begin{align}
  \Box u = F(t,x), \quad t \in [0, \infty) \times \R^n, \label{eq: wave 1}\\
  u(0)=f, \quad \partial_t u(0) = g. \label{eq: wave 2}
\end{align}
is given by the formula
\[
 u(t) = \partial_t U_0(t)f + U_0(t)g  + W_0(t)F.
\]
For simplicity, we denote by $U_0(t)[f,g]=\partial_t U_0(t)f + U_0(t)g$ the propagation of the free wave with initial data $f$ and $g$.

\begin{defn}
 We say that the exponent pair $\pair$ is radially wave-admissible if
 \begin{equation} \label{eq: defn rad Strich}
  \rp q + \frac {n-1} r < \frac {n-1} 2, \qquad n >1,
 \end{equation}
where $2 \leq q,r \leq \infty$, $\pair \neq (\infty, \infty)$, or if $\pair$ coincides with $(\infty, 2)$.
\end{defn}

\begin{thm}[\cite{FW}, \cite{HK}] \label{thm: wave hom}
The following estimate
\begin{equation} \label{est: rad Strich}
  \normQR{U_0(t)[f,g]}{q}{r} \lesssim \normHt{f}{s} + \normHt{g}{s-1},
 \end{equation}
holds for all spherically symmetric $f \in \dot{H}^s(\R^n)$, $g \in \dot{H}^{s-1}(\R^n)$, whenever the exponent pair $\pair$ is radially wave-admissible and the Sobolev exponent $s$ satisfies the scaling condition
 \[
  \rp q + \frac n r = \frac n 2 - s.
 \]
\end{thm}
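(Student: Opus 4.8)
Estimate \eqref{est: rad Strich} is the radial homogeneous Strichartz estimate of Fang--Wang \cite{FW} and Hidano--Kurokawa \cite{HK}, and the plan is to reduce it, in the usual way, to an improved dispersive estimate for the half-wave propagators restricted to radial data. First I would pass from $U_0(t)$ to $U_\pm(t)$: since $U_0(t)$ has Fourier symbol $\sin(t|\xi|)/|\xi|=(e^{it|\xi|}-e^{-it|\xi|})/(2i|\xi|)$, one has $\partial_t U_0(t)f=\tfrac12\bigl(U_+(t)+U_-(t)\bigr)f$ and $U_0(t)g=\tfrac1{2i}\bigl(U_+(t)-U_-(t)\bigr)D^{-1}g$; as $D^{-1}$ is an isometry of $\dot H^{s-1}$ onto $\dot H^s$, commutes with $U_\pm(t)$ and preserves radial symmetry, it suffices to show, for radial $h\in\dot H^s$ and radially wave-admissible $\pair$,
\[
  \normQR{U_\pm(t)h}{q}{r}\lesssim\norm{h}_{\dot H^s},\qquad \tfrac1q+\tfrac nr=\tfrac n2-s .
\]
Inserting a Littlewood--Paley decomposition $h=\sum_{\lambda\in2^{\Z}}P_\lambda h$ and using that $U_\pm(t)$ does not move frequency supports, for $2\le q\le\infty$ and $2\le r<\infty$ the square-function inequality together with Minkowski's inequality (all exponents in sight being $\ge2$) reduces matters to the scale-invariant unit-frequency bound $\normQR{U_\pm(t)P_1h}{q}{r}\lesssim\normP{h}{2}$; rescaling reinserts the factor $\lambda^s$, and the pieces sum because $\big(\sum_\lambda\lambda^{2s}\normP{P_\lambda h}{2}^2\big)^{1/2}\sim\norm{h}_{\dot H^s}$. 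The case $r=\infty$, where no $\ell^1$ frequency sum is available, must be handled separately, e.g.\ on the strictly admissible region with a Besov-norm refinement of the data side that is subsequently upgraded, or directly as in \cite{FW},\cite{HK}.

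For the unit-frequency bound I would exploit that a radial solution is a one-dimensional oscillatory integral: if $\widehat h(\xi)=H(|\xi|)$ is supported in $|\xi|\sim1$, then
\[
  U_\pm(t)h(x)=c_n\int_0^\infty e^{\pm it\rho}\,H(\rho)\,\frac{J_{(n-2)/2}(\rho|x|)}{(\rho|x|)^{(n-2)/2}}\,\rho^{\,n-1}\,d\rho ,
\]
a function of $(t,|x|)$ alone, so the identification of radial $L^p(\R^n)$ with $L^p((0,\infty);\rho^{n-1}d\rho)$ from Section~3 keeps the bookkeeping transparent. For $|x|\lesssim1$ the Bessel kernel is a bounded symbol and one recovers the usual dispersive decay $|U_\pm(t)P_1h(x)|\lesssim(1+|t|)^{-(n-1)/2}\normP{h}{1}$. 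For $|x|\gtrsim1$ one inserts the asymptotics $J_\nu(r)/r^\nu=r^{-\nu-1/2}\sum_\pm e^{\pm ir}a_\pm(r)$, with $a_\pm$ symbols of order $0$; this pulls out the extra spatial weight $|x|^{-(n-1)/2}$ and leaves, apart from that weight, a one-dimensional integral in $\rho$ with linear phase $(\pm t\pm|x|)\rho$, which upon integration by parts in $\rho$ and conversion back to a norm of $h$ produces a dispersive bound carrying both the gain $|x|^{-(n-1)/2}$ and a factor $(1+\abs{\,|t|-|x|\,})^{-1/2}$. This decay, strictly better than the non-radial $|t|^{-(n-1)/2}$, is precisely what widens the admissible region to $1/q+(n-1)/r<(n-1)/2$.

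Feeding the two regimes of this dispersive estimate into the usual $TT^*$ argument, with the Hardy--Littlewood--Sobolev inequality applied in the time variable exactly as in Keel--Tao \cite{KT} but now using the sharper radial decay, I would get $\normQR{U_\pm(t)P_1h}{q}{r}\lesssim\normP{h}{2}$ for every radially wave-admissible $\pair$; the endpoint $\pair=(\infty,2)$ is just Plancherel and the $L^2$-unitarity of $U_\pm(t)$, while $\pair=(\infty,\infty)$ is genuinely excluded. Undoing the reductions of the first paragraph then returns \eqref{est: rad Strich}.

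The hard part is establishing the radial dispersive estimate cleanly and with sharp rates: controlling the remainder terms in the Bessel asymptotics, matching the near-origin and off-origin regimes so that one estimate with a usable right-hand side emerges, and tracking how much decay actually survives in $\abs{\,|t|-|x|\,}$. This is what separates the full strict range $1/q+(n-1)/r<(n-1)/2$ from the narrower Keel--Tao wave-admissible range, and it is also the point at which the $r=\infty$ summation of the first paragraph has to be treated with extra care.
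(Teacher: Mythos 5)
The paper does not actually prove Theorem~\ref{thm: wave hom}: it is imported from Fang--Wang \cite{FW} and Hidano--Kurokawa \cite{HK} and used as a black box, so there is no internal argument to compare yours with. Your outline does follow the route taken in those references --- reduction to the half-wave operators $U_\pm$, Littlewood--Paley localization plus rescaling of a unit-frequency estimate, and the radial representation through the Hankel transform with Bessel asymptotics supplying the extra $|x|^{-(n-1)/2}$ decay --- so as a road map of where the estimate comes from it is accurate. (Incidentally, your reading of $U_0(t)$ as the operator with symbol $\sin(t|\xi|)/|\xi|$ is the one consistent with the solution formula $u=\partial_t U_0(t)f+U_0(t)g+W_0(t)F$, even though the paper's displayed definition literally says $(U_++U_-)/2$; the paper's formulas for $U_0$ and $W_0$ contain typos.)

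As a self-contained proof, however, the proposal has a genuine gap exactly where the theorem is hard. The step ``feeding the two regimes of this dispersive estimate into the usual $TT^*$ argument \dots exactly as in Keel--Tao'' does not work as stated: the Keel--Tao machinery converts an untruncated time decay $\|U(t)U(s)^*\|_{L^1\to L^\infty}\lesssim |t-s|^{-\sigma}$ into estimates on the corresponding admissible line, and it is blind to the radial improvement, which lives in a spatial weight $|x|^{-(n-1)/2}$ and in decay with respect to $\bigl||t|-|x|\bigr|$ rather than in $|t-s|$. To reach the enlarged region $1/q+(n-1)/r<(n-1)/2$ one must integrate that weight against the measure $\rho^{n-1}d\rho$ inside the $L^r_x$ norm (this is where the strict inequality actually originates) and then run an essentially one-dimensional weighted or bilinear argument in $t$ and $\rho$; that computation is the technical core of \cite{FW}, \cite{HK}, and your sketch acknowledges it (``the hard part'') but does not supply it. The same applies to the $r=\infty$ cases, which you explicitly defer back to the references, and to the remainder terms in the Bessel expansion and the merging of the $|x|\lesssim 1$ and $|x|\gtrsim 1$ regimes into a single usable kernel bound. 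So the proposal correctly identifies the ingredients of the cited proof, but the kernel estimate and the mechanism by which it yields every radially wave-admissible pair are asserted rather than proven, and as it stands it is not an independent proof of \eqref{est: rad Strich}.
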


\begin{thm} \label{thm: wave full}
Let $u(t)$ be the solution to the IVP for the wave equation \eqref{eq: wave 1}, \eqref{eq: wave 2}, where $f$, $g$, and $F(t)$ are spherically symmetric. Then the following estimate
\begin{equation*}
  \normQR{D^{\sigma_1}u(t)}{q}{r} \lesssim \norm{f}_{\dot H^s} + \norm{g}_{\dot H^{s-1}} + \normQR{D^{\sigma_2}F}{\tilde q'}{\tilde r'}
\end{equation*}
holds for all $f \in \dot{H}^s(\R^n)$, $g \in \dot{H}^{s-1}(\R^n)$, and $D^{\sigma_2}F(t) \in L^{\tilde q'}_tL^{\tilde r'}_x$ whenever $\pair$, $\pairt$ are two radially wave admissible pairs\footnote{Except when $q=\tilde q=2$, $r\neq \tilde r$, and either $r$ or $\tilde r$ is equal to $\infty$.} and satisfy the following scaling condition
\begin{align}\label{eq: dim cond}
 \rp q + \frac n r - \sigma_1= \frac n 2 - s = \rp {\tilde q'} + \frac n {\tilde r'} - 2-\sigma_2.
\end{align}
\end{thm}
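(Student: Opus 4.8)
The plan is to assemble the estimate from three pieces, one for each term in the Duhamel representation $u(t) = U_0(t)[f,g] + W_0(t)F$, and to reduce everything to the homogeneous estimate of Theorem~\ref{thm: wave hom} together with the abstract duality principle of Theorem~\ref{thm: inhom abs}. First I would apply the operator $D^{\sigma_1}$ to $u$. Since $D$ has Fourier symbol $\abs{\xi}$ it commutes with the half-wave propagators $U_\pm(t)$ and with the homogeneous Sobolev norms, so $\norm{D^{\sigma_1} U_0(t)[f,g]}_{L^q_t L^r_x} = \norm{U_0(t)[D^{\sigma_1}f, D^{\sigma_1}g]}_{L^q_t L^r_x}$. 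Now $D^{\sigma_1} f \in \dot H^{s-\sigma_1}$, and the scaling identity \eqref{eq: dim cond} gives $\tfrac1q + \tfrac nr = \tfrac n2 - (s - \sigma_1)$, which is exactly the scaling condition of Theorem~\ref{thm: wave hom} with Sobolev exponent $s - \sigma_1$. Hence the free-wave part is bounded by $\norm{f}_{\dot H^s} + \norm{g}_{\dot H^{s-1}}$, using that $D^{\sigma_1}$ is an isometry $\dot H^s \to \dot H^{s-\sigma_1}$.

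The substantive part is the Duhamel term $D^{\sigma_1} W_0(t) F$. Writing $W_0(t)F = \int_0^t \frac{U_+(t-s) - U_-(t-s)}{2iD} F(s)\, ds$, I would rewrite this, up to harmless constants and sign conventions, in the form $U_0(t)\inT U_0^*(s) G(s)\, ds$ for an appropriately rescaled forcing term $G$. Concretely, the map $U_0$ (or each half-wave piece) has an adjoint that reintroduces a propagator and a power of $D$, so that after commuting $D^{\sigma_1}$ through and writing $\sigma_1 + \sigma_2 = $ (the quantity fixed by \eqref{eq: dim cond}), the operator $F \mapsto D^{\sigma_1} W_0(t) F$ is exactly of the form to which Theorem~\ref{thm: inhom abs} applies, with forcing $D^{\sigma_2} F$. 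The homogeneous estimate \eqref{est: rad Strich} holds for the pair $\pair$ on the space $\dot H^{s-\sigma_1}$ and for the pair $\pairt$ on the space $\dot H^{-(s-\sigma_1-\text{(shift)})}$; the two exponents of \eqref{eq: dim cond} are precisely designed so that the Sobolev index produced by $\pairt$ on the right matches the one consumed by $\pair$ on the left after the $2iD$ in the denominator is accounted for (this is the source of the $-2$ in \eqref{eq: dim cond}). Invoking Theorem~\ref{thm: inhom abs} with $U(t) = U_0(t)$ then yields $\norm{D^{\sigma_1} W_0(t) F}_{L^q_t L^r_x} \lesssim \norm{D^{\sigma_2} F}_{L^{\tilde q'}_t L^{\tilde r'}_x}$ whenever $q > \tilde q'$ or $\pair = \pairt$.

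The remaining issue is the \emph{diagonal} and boundary case $q = \tilde q'$ (equivalently $q = \tilde q = 2$ after using duality of admissible pairs), where the Christ--Kiselev lemma is unavailable. Here I would argue as in Keel--Tao: for $q = \tilde q = 2$ one proves the full (non-truncated) bilinear form estimate $\langle W_\rho^{\text{full}} F, G\rangle \lesssim \norm{F} \norm{G}$ directly by a $TT^*$ / orthogonality argument in the radial variables, then recovers the retarded estimate by a symmetry decomposition of the region $\{s < t\}$; the excluded configurations in the footnote — $q = \tilde q = 2$ with $r \neq \tilde r$ and $r$ or $\tilde r$ equal to $\infty$ — are exactly the ones where this symmetrization fails, and I would simply exclude them, consistent with the theorem statement. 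I expect this endpoint bookkeeping to be the main obstacle: matching the two radial Sobolev scales through the $1/D$ factor so that \eqref{eq: dim cond} comes out correctly, and handling $q = \tilde q = 2$ without Christ--Kiselev, are the only places where anything beyond a direct citation is needed; everything else is commuting Fourier multipliers and checking that \eqref{eq: dim cond} reduces to the scaling hypotheses of Theorems~\ref{thm: wave hom} and~\ref{thm: inhom abs}.
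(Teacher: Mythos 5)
Your overall strategy coincides with the paper's: reduce the Duhamel term to the abstract inhomogeneous estimate (Theorem~\ref{thm: inhom abs}) fed by the homogeneous radial estimate (Theorem~\ref{thm: wave hom}), track the Sobolev exponents through the $D$-powers, and patch the $q=\tilde q'$ endpoint separately. However, there are two concrete points where your write-up departs from a correct argument.

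First, the central claim that $W_0(t)F$ can be recast ``up to harmless constants'' as $U_0(t)\int_0^t U_0^*(s)G(s)\,ds$ is false: $U_0(t)=\cos(tD)$ is not a one-parameter group, so $U_0(t)U_0^*(s)$ produces both $\cos((t-s)D)$ and $\cos((t+s)D)$, and the spurious $t+s$ term cannot be absorbed into a rescaling $F\mapsto G$; it simply is not the kernel of $W_0$. Your parenthetical ``(or each half-wave piece)'' is the correct fix and is what the paper actually does: split $\sin((t-s)D)/D$ into $U_+(t-s)$ and $U_-(t-s)$, observe that $U_\pm^*(s)=D^{-2s}U_\mp(s)$ in the $\dot H^s$ pairing so that each piece genuinely has the form $U_\pm(t)\int_0^t U_\pm^*(s)[\cdot]\,ds$, apply Theorem~\ref{thm: inhom abs} to each half-wave, and sum. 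The bookkeeping you gesture at (setting $\sigma_2 = s+\beta-1$ with $s,\beta$ the two Sobolev scales and checking this reproduces the $-2$ in \eqref{eq: dim cond}) is correct once the half-wave split is made explicit, but as written the primary reduction step would not go through.

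Second, your account of the endpoint case misattributes the footnote's exclusions. Theorem~\ref{thm: inhom abs} covers $q>\tilde q'$ (Christ--Kiselev) or the literal diagonal $(q,r)=(\tilde q,\tilde r)$ (Keel--Tao symmetrisation). When $q=\tilde q=2$ but $r\neq\tilde r$, neither branch of Theorem~\ref{thm: inhom abs} applies, and the Keel--Tao symmetrisation you invoke is not available either. The paper handles this configuration by an additional Sobolev-embedding step that reduces $r\neq\tilde r$ to $r=\tilde r$; the excluded cases in the footnote are precisely those where this embedding would need to reach $L^\infty$, which is why $r=\infty$ or $\tilde r=\infty$ must be ruled out when $q=\tilde q=2$. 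So the exclusion is a limitation of the Sobolev-embedding reduction, not of the symmetrisation argument itself. You should add this reduction explicitly; without it the case $q=\tilde q=2$, $r\neq\tilde r$ is unproved.
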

\begin{proof}
The homogeneous Strichartz estimates of theorem \ref{thm: wave hom} hold for each of the operators $U_{\pm}$ separately. For simplicity let us consider $U_{-}(t)$ first. For $U_{-}(t): H^s(\R^n) \rightarrow L^2(\R^n)$ we have that $U_{-}^*(t): L^2(\R^n) \rightarrow H^s(\R^n)$ and $U_{-}^*(t)=D^{-2s}U_{+}(t)$. In view of theorem \ref{thm: wave hom}, the operators $T_1: H^s(\R^n) \rightarrow L^q_t L_x^r$, $T_1f = D^{\sigma_1}U_{-}(t)f$, and $T_2: H^s(\R^n) \rightarrow L^{\tilde q}_tL_x^{\tilde r}$, $T_2f = D^{s-\beta}U_{-}(t)f$ are bounded on spherically symmetric data $f \in H^s(\R^n)$, where
\[
 s= \frac n 2 - \frac n r - \rp q + \sigma_1, \quad \beta = \frac n 2 - \frac n {\tilde r} - \rp {\tilde q},
\]
and $\pair$, $\pairt$ are two radially wave admissible pairs and $q>\tilde q'$. Hence, in view of theorem \ref{thm: inhom abs}, we obtain the estimate
\begin{align*}
  \normQR{\inT U_{-}(t-s)D^{s-\beta-2s}F(s)ds}{q}{r} \lesssim \normQR{F}{\tilde q'}{\tilde r'}.
\end{align*}
Repeating the same argument for $U_{+}(t)$, we obtain the estimate
\begin{align*}
  \normQR{\inT W_0(t-s)F(s)ds}{q}{r} \lesssim \normQR{D^{s+\beta-1}F}{\tilde q'}{\tilde r'}.
\end{align*}
Setting $\sigma_2 = s+\beta-1$ gives condition \eqref{eq: dim cond}.

The case when $\pair$, $\pairt$ are two radially wave admissible pairs with $q=\tilde q=2$ and $r=\tilde r$ is treated similarly.

And finally, the case when $\pair$, $\pairt$ are two radially wave admissible pairs with $q=\tilde q=2$ and $r \neq \tilde r$ is reduced to the previous one by Sobolev embedding.
\end{proof}

\section{Applications}

The two-dimensional DKG system reads
\begin{align}
 (\partial_t + \sigma_1 \partial_x + \sigma_2 \partial_y + iM\sigma_3) \psi(t,x,y) &=i\phi\sigma_3\psi, \quad (t,x,y) \in [0,\infty)\times\R\times\R, \label{eq: DKG 1}\\
 (\partial_t^2-\Delta+m^2)\phi(t,x,y)                                      &=\langle \sigma_3\psi, \psi\rangle, \label{eq: DKG 2}
\end{align}
where
\begin{align}
 \sigma_1 = \begin{pmatrix} 0 & 1 \\ 1 & 0 \end{pmatrix},\quad \sigma_2 = \begin{pmatrix} 0 & -i \\ i & 0 \end{pmatrix},\quad \sigma_3 = \begin{pmatrix} 1 & 0 \\ 0 & -1 \end{pmatrix},
\end{align}
are the Pauli spin matrices and $M$ and $m$ are nonnegative constants. The unknown quantities are a two-spinor $\psi(t,x,y): [0,\infty)\times\R^2 \rightarrow \C^2$, and a real scalar field $\phi(t,x,y): [0,\infty)\times\R^2 \rightarrow \R$.

Let us  recall that the system \eqref{eq: DKG 1}, \eqref{eq: DKG 2} is form covariant with respect to Lorentzian transformations and in particular to spatial rotations. Suppose that the coordinate system $Oxy$ is changed into $Ox'y'$ by a spatial rotation $R(\varphi)$ of an angle $\varphi$
\begin{align*}
 \begin{pmatrix} x' \\ y' \end{pmatrix} = \begin{pmatrix} \cos \varphi & -\sin \varphi \\ \sin \varphi & \cos \varphi \end{pmatrix} \begin{pmatrix} x \\ y \end{pmatrix}.
\end{align*}
Then we want to find a rule $\psi \rightarrow \psi'$ as $Oxy \rightarrow Ox'y'$ of the form $\psi'(t,z')=S(\varphi) \psi(t,z)$, where
$S(\varphi)$ is a $2\times2$ matrix and $z$ denotes $(x,y)$, that leaves \eqref{eq: DKG 1}, \eqref{eq: DKG 2} form invariant. Of course, for the scalar field $\phi$ we have $\phi'(t,z')=\phi(t,z)$. Substituting in  \eqref{eq: DKG 1}, \eqref{eq: DKG 2}
\begin{align*}
 \psi(t,z) = S^{-1}(\varphi)\psi'(t, R(\varphi)z),\\
 \phi(t,z) = \phi'(t,R(\varphi)z),
\end{align*}
we obtain
\begin{align}
 (\partial_t + \sigma_1' \partial_{x'} + \sigma_2' \partial_{y'} + iM\sigma_3') \psi'(t, z') &=i\phi\sigma_3'\psi',  \label{eq: DKG 1'}\\
 (\partial_t^2-\Delta+m^2)\phi'(t,z')                                      &=\langle \sigma_3'\psi', \psi'\rangle, \label{eq: DKG 2'}
\end{align}
where
\begin{align*}
 \sigma_1' &= S(\varphi) \left(  \sigma_1 \cos \varphi  -  \sigma_2 \sin \varphi  \right) S^{-1}(\varphi) \\
 \sigma_2' &= S(\varphi) \left( -\sigma_1  \sin \varphi +  \sigma_2 \cos \varphi  \right) S^{-1}(\varphi)\\
 \sigma_3' &=\ S(\varphi) \sigma_3  S^{-1}(\varphi).
\end{align*}
Thus the matrix $S(\varphi)$ must be such that $\sigma'_j=\sigma_j$, for $j=1,2,3$. One can check that if we set
\[
 S(\varphi) = \begin{pmatrix} e^{i\varphi} & 0 \\ 0 & 1 \end{pmatrix}
\]
all of the above conditions are satisfied. Note that the Klein-Gordan part of the system is form invariant as $\langle \sigma_3'\psi', \psi'\rangle = \langle \sigma_3\psi, \psi\rangle$ due to the fact that $S(\varphi)$ is unitary and the well-known invariance of the Laplacian $\Delta$ with respect to rotations. Thus we come with the following
\begin{defn} We say that the two-spinor $\psi_0(z): \R^2 \rightarrow \C^2$ is spherically symmetric if it satisfies
\begin{align} \label{eq: spin sym}
 \psi_0(R(\varphi)z) = S(\varphi)\psi_0(z).
\end{align}
\end{defn}

\begin{lem} A function $\psi_0(z): \R^2 \rightarrow \C^2$ satisfies \eqref{eq: spin sym} if and only if it has the form
\[
 \psi_0(z) = S(\varphi)\chi(\abs{z}),
\]
where $\varphi$ is the argument of the complex number $x+iy$ and $\chi(\rho) : [0,\infty) \rightarrow \C^2$.
\end{lem}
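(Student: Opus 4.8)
The plan is to characterize the solutions of the functional equation \eqref{eq: spin sym} directly. First I would observe that since $S(\varphi)$ is $2\pi$-periodic in $\varphi$ (indeed $S(2\pi) = I$), any candidate formula $\psi_0(z) = S(\varphi)\chi(\abs z)$ must be checked for consistency at $\varphi = 0$ versus $\varphi = 2\pi$, which is automatic here; the genuinely delicate point is the behaviour at the origin $z = 0$, where the argument $\varphi$ is undefined, and this is where I expect the main obstacle to lie.

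For the ``if'' direction I would simply substitute $\psi_0(z) = S(\arg z)\chi(\abs z)$ into \eqref{eq: spin sym}: writing $z$ with argument $\varphi_0$, the point $R(\varphi)z$ has argument $\varphi + \varphi_0$ and the same modulus, so $\psi_0(R(\varphi)z) = S(\varphi+\varphi_0)\chi(\abs z)$, and since $S$ is a homomorphism of the rotation group into $\mathrm{U}(2)$ (one checks $S(\varphi_1)S(\varphi_2) = S(\varphi_1+\varphi_2)$ from the explicit diagonal form), this equals $S(\varphi)S(\varphi_0)\chi(\abs z) = S(\varphi)\psi_0(z)$, as required.

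For the ``only if'' direction, suppose $\psi_0$ satisfies \eqref{eq: spin sym}. Fix $\rho > 0$ and define $\chi(\rho) := \psi_0(\rho, 0)$, i.e. the value of $\psi_0$ at the point on the positive $x$-axis at distance $\rho$. An arbitrary $z$ with $\abs z = \rho$ and argument $\varphi$ is $z = R(\varphi)(\rho,0)$, so applying \eqref{eq: spin sym} with $(\rho,0)$ in place of $z$ gives $\psi_0(z) = \psi_0(R(\varphi)(\rho,0)) = S(\varphi)\psi_0(\rho,0) = S(\varphi)\chi(\rho)$, which is the claimed form. The only remaining issue is that $\varphi$ is only well-defined modulo $2\pi$, but since $S(\varphi+2\pi) = S(\varphi)$ the formula $S(\varphi)\chi(\rho)$ is unambiguous. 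At $z = 0$ one sets $\chi(0) := \psi_0(0)$ and notes that \eqref{eq: spin sym} with $z = 0$ forces $\psi_0(0) = S(\varphi)\psi_0(0)$ for all $\varphi$; since $\bigcap_\varphi \ker(S(\varphi) - I)$ is the span of $(0,1)^{T}$, consistency requires $\chi(0)$ to lie in that span, which is exactly the condition for $S(\varphi)\chi(0)$ to be independent of $\varphi$ — so the formula extends continuously to the origin precisely for admissible $\chi$, and no further constraint is needed for the pointwise equivalence asserted.

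Thus the equivalence holds, with the regularity of $\chi$ on $[0,\infty)$ inherited from that of $\psi_0$ away from the origin via the local diffeomorphism $(\rho,\varphi) \mapsto R(\varphi)(\rho,0)$. The main obstacle, as noted, is the origin; it is resolved by the observation that the isotropy subgroup of $S$ acting on $\C^2$ has the nonzero fixed vector $(0,1)^{T}$, so there is no contradiction and the stated form is both necessary and sufficient.
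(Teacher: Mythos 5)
Your proof is correct and simply fleshes out the computation that the paper dismisses as ``Trivial'': restrict to the positive $x$-axis to define $\chi$, then use the homomorphism property $S(\varphi_1)S(\varphi_2)=S(\varphi_1+\varphi_2)$ to verify both directions. The extra care you take at the origin (noting that the equivariance condition at $z=0$ forces $\psi_0(0)$ into $\ker(S(\varphi)-I)=\operatorname{span}\{(0,1)^{T}\}$, so the formula is unambiguous there) is a genuine refinement the paper elides, but it does not change the substance of the argument.
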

\begin{proof}
Trivial.
\end{proof}

\begin{rem} From the explicit representation above and the fact that $e^{i\varphi}=(x+iy)/ \abs{z} \in C^\infty(\R^2 \setminus O)$, we see that the smoothness of $\psi_0$ depends on the smoothness of $\chi$ and the behavior of $\chi$ around the origin.
\end{rem}

\begin{lem} Suppose that IVP for \eqref{eq: DKG 1}, \eqref{eq: DKG 2} has a unique solution in some class of initial data. Then for a spherically symmetric data from that class the solution to \eqref{eq: DKG 1}, \eqref{eq: DKG 2} remains spherically symmetric for all time.
\end{lem}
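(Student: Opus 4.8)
The plan is to combine the form covariance of the DKG system under spatial rotations, verified above, with the assumed uniqueness. Let $(\psi_0,\phi_0,\phi_1)$ be spherically symmetric initial data belonging to the class in which the solution is unique, and let $(\psi,\phi)$ be the corresponding solution. Fix an angle $\varphi$ and set
\[
 \tilde\psi(t,z)=S(\varphi)^{-1}\psi(t,R(\varphi)z),\qquad \tilde\phi(t,z)=\phi(t,R(\varphi)z).
\]
By the computation carried out above --- the one showing that, with $S(\varphi)$ chosen as indicated, one has $\sigma_j'=\sigma_j$ for $j=1,2,3$ --- the pair $(\tilde\psi,\tilde\phi)$ again solves \eqref{eq: DKG 1}, \eqref{eq: DKG 2} on the same time interval.

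The next step is to compare initial data. Using the spherical symmetry condition \eqref{eq: spin sym},
\[
 \tilde\psi(0,z)=S(\varphi)^{-1}\psi_0(R(\varphi)z)=S(\varphi)^{-1}S(\varphi)\psi_0(z)=\psi_0(z),
\]
while, since $\phi_0$ and $\phi_1$ are ordinary radial functions and $\Delta$ commutes with rotations, $\tilde\phi(0,z)=\phi_0(z)$ and $\partial_t\tilde\phi(0,z)=\phi_1(z)$. Thus $(\tilde\psi,\tilde\phi)$ solves the same IVP as $(\psi,\phi)$. Provided $(\tilde\psi,\tilde\phi)$ lies in the uniqueness class, we conclude $\tilde\psi=\psi$ and $\tilde\phi=\phi$, that is,
\[
 \psi(t,R(\varphi)z)=S(\varphi)\psi(t,z),\qquad \phi(t,R(\varphi)z)=\phi(t,z)
\]
for every $\varphi$ and all $t$ in the interval of existence; this is precisely the assertion that the solution remains spherically symmetric.

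It remains to check that the rotated-conjugated pair $(\tilde\psi,\tilde\phi)$ does lie in the class where uniqueness holds; this is the one point that genuinely requires a word, since uniqueness is asserted only within a prescribed class (for instance the class of Theorem \ref{thm: DKG}). The verification is routine: the map $f\mapsto f(R(\varphi)\cdot)$ acts as a unitary, hence bounded, operator on every $H^s(\R^2)$ and therefore on the spaces $C([0,T];H^s)$ and $C^1([0,T];H^{r-1})$, and left multiplication by the constant unitary matrix $S(\varphi)^{-1}$ preserves $\C^2$-valued Sobolev norms pointwise in $t$; hence $(\tilde\psi,\tilde\phi)$ has exactly the same regularity and size as $(\psi,\phi)$. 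I expect this membership check to be the only real obstacle, and even it is minor --- the substance of the lemma is entirely contained in the form-covariance identity combined with uniqueness.
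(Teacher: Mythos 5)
Your proof is correct and is exactly the argument the paper has in mind when it labels this lemma ``Trivial'': conjugate the solution by the rotation $R(\varphi)$ and the spinor representation $S(\varphi)$, observe by the form-covariance computation that the conjugated pair again solves the DKG system, check that the initial data are unchanged (using \eqref{eq: spin sym} for $\psi_0$ and ordinary radiality for $\phi_0,\phi_1$), and invoke uniqueness to conclude $\tilde\psi=\psi$, $\tilde\phi=\phi$, which is the symmetry statement. Your closing remark that one must verify $(\tilde\psi,\tilde\phi)$ lies in the uniqueness class, and the observation that rotations and constant unitary matrices preserve the relevant $H^s$ and $C_t H^s$ norms, is the right and only caveat. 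Nothing to add.
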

\begin{proof}
Trivial.
\end{proof}

\begin{lem} \label{lem: DKG}
Suppose that $u(t)$ is the solution to the IVP for the wave equation (\ref{eq: wave 1}), (\ref{eq: wave 2}) in space dimension $n=2$. Suppose that the data $f$ and $g$ and the forcing term $F(t)$ are spherically symmetric with $f \in  H^s(\R^2)$, $g \in  H^{s-1}(\R^2)$, and $F(t) \in L^\infty_tL^1_x(\R^2)$. Then we have the estimate
\begin{equation} \label{est: L1}
 \normQR{D^s u(t)}{\infty}{2} \lesssim_T \norm{f}_{\dot H^s(\R^2)} + \norm{g}_{\dot H^{s-1}(\R^2)} + \norm{F}_{L^{\tilde q'}_t([0,T]; L^{1}_x)},
\end{equation}
for $s \in [0, 1/2)$ and $1/{\tilde q}=s$.
\end{lem}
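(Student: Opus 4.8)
The plan is to read off \eqref{est: L1} directly from Theorem~\ref{thm: wave full} in the range $s\in(0,1/2)$, and to treat the degenerate endpoint $s=0$ by a separate, more delicate argument. Write the solution in the Duhamel form $u(t)=U_0(t)[f,g]+W_0(t)F$ introduced in the previous section; since $f$, $g$ and $F(t)$ are spherically symmetric the hypotheses of Theorem~\ref{thm: wave full} are met, and the entire content of \eqref{est: L1} will be the particular choice of admissible exponents that makes that theorem say precisely what is claimed here.

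Fix $s\in(0,1/2)$ and apply Theorem~\ref{thm: wave full} with
\[
  (q,r)=(\infty,2),\qquad \sigma_1=s,\qquad (\tilde q,\tilde r)=\bigl(\tfrac1s,\infty\bigr),\qquad \sigma_2=0.
\]
With $n=2$ the left half of the scaling relation \eqref{eq: dim cond} reads $\tfrac1\infty+\tfrac22-s=1-s=\tfrac n2-s$, so the admissible Sobolev exponent is exactly $s$ and the first two terms on the right of Theorem~\ref{thm: wave full} become $\norm{f}_{\dot H^s}+\norm{g}_{\dot H^{s-1}}$; its right half reads $\tfrac1{\tilde q'}+\tfrac21-2-0=\tfrac1{\tilde q'}=1-s$, that is $\tilde q'=(1-s)^{-1}$, equivalently $1/\tilde q=s$, exactly as in the statement, while $\sigma_2=0$ turns $D^{\sigma_2}F$ into $F$, so the forcing term is literally $\norm{F}_{L^{\tilde q'}_tL^1_x}$. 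It remains to check admissibility: $(\infty,2)$ is radially wave-admissible, being the exceptional pair singled out in the definition of radial wave admissibility; for $(1/s,\infty)$ the left-hand side of \eqref{eq: defn rad Strich} equals $s+\tfrac{n-1}{\infty}=s$, which is strictly less than $\tfrac{n-1}{2}=\tfrac12$ precisely because $s<1/2$, while $2\le 1/s<\infty$ and $(1/s,\infty)\neq(\infty,\infty)$ since $0<s<1/2$; and the footnote exception to Theorem~\ref{thm: wave full} does not arise because $q=\infty\neq 2$. Hence Theorem~\ref{thm: wave full} yields \eqref{est: L1} for every $s\in(0,1/2)$ with a constant independent of $T$; passing to the finite interval $[0,T]$ is harmless since $W_0(t)F$ for $t\le T$ depends only on $F|_{[0,T]}$, and since $\tilde q'<\infty$ one may further bound $\norm{F}_{L^{\tilde q'}_t([0,T];L^1_x)}$ by $T^{1/\tilde q'}\norm{F}_{L^\infty_t([0,T];L^1_x)}$, in line with the stated hypothesis $F(t)\in L^\infty_tL^1_x$.

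It remains to treat the endpoint $s=0$, where the choice above degenerates to the forbidden pair $(\infty,\infty)$; this is the step I expect to be the real obstacle. The homogeneous part is immediate: the free propagator acts on $(f,g)$ through the Fourier multipliers $\cos(t|\xi|)$ and $|\xi|^{-1}\sin(t|\xi|)$, bounded by $1$ and by $|\xi|^{-1}$ respectively, so $\normQR{U_0(t)[f,g]}{\infty}{2}\lesssim\norm{f}_{L^2}+\norm{g}_{\dot H^{-1}}$. For $W_0(t)F$ I would split into frequencies $|\xi|\le 1$ and $|\xi|\ge 1$. The low-frequency piece is controlled by Minkowski's inequality in the Duhamel variable followed by Young's inequality, which reduces the matter to the multiplier bound
\[
  \norm{\,\tfrac{\sin(aD)}{D}\,\mathbf{1}_{\{|\xi|\le 1\}}}_{L^2_\xi}^2
  =\int_{|\xi|\le 1}\frac{\sin^2(a|\xi|)}{|\xi|^2}\,d\xi
  =2\pi\int_0^a\frac{\sin^2 u}{u}\,du\ \lesssim\ 1+\log_+ a ,
\]
whose right-hand side is integrable in $a=t-\tau$ over $[0,T]$ — this is exactly where the constant is forced to depend on $T$. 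The high-frequency piece is the genuinely delicate point: the same Minkowski--Young estimate now diverges logarithmically (morally because the two-dimensional forward fundamental solution $\tfrac{\sin(tD)}{D}\delta_0=(2\pi)^{-1}(t^2-|x|^2)_+^{-1/2}$ just fails to lie in $L^2$ at the light cone), so here one must instead exploit the oscillation of the propagator in the Duhamel variable together with the full $F\in L^\infty_tL^1_x$ hypothesis — for instance through a Plancherel-in-time argument. I expect this last step to be the principal technical difficulty of the lemma.
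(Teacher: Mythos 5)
For $s\in(0,1/2)$ your argument is exactly the paper's: the paper's proof applies Theorem~\ref{thm: wave full} with $(q,r)=(\infty,2)$, $(\tilde q,\tilde r)=(\tilde q,\infty)$, $\sigma_1=s$, $\sigma_2=0$, phrasing the admissibility constraint as $\tilde q>2$ rather than $s<1/2$. Your verification of the scaling relation \eqref{eq: dim cond}, of radial wave admissibility, and of the innocuousness of the footnote exception is correct, and your remark about where the $T$-dependence in $\lesssim_T$ comes from is fine.

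Your concern about $s=0$ is well founded, but the separate argument you sketch is not needed and, as you suspect, cannot be completed. First, the paper's own one-line proof silently excludes $s=0$: the pair $(\tilde q,\tilde r)=(\infty,\infty)$ is excluded from the definition of radial wave admissibility, and the restriction ``$\tilde q>2$'' in the paper's proof is just the translation of $s<1/2$, with equality $\tilde q=\infty$ never reached. Second, and more to the point, the estimate is actually \emph{false} at $s=0$, for precisely the reason you observe in your high-frequency discussion: the two-dimensional forward fundamental solution $\tfrac{\sin(tD)}{D}\delta_0=(2\pi)^{-1}(t^2-|x|^2)_+^{-1/2}$ is not in $L^2_x$ (its square has a logarithmic divergence at the light cone), so by letting $F$ approximate a Dirac mass in space and time one gets $\|F\|_{L^1_t L^1_x}\to 1$ while $\|W_0(t)F\|_{L^\infty_t L^2_x}\to\infty$. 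No Plancherel-in-time refinement can overcome this, since the counterexample already saturates the full $L^\infty_t L^1_x$ hypothesis on a bounded time interval. The range in the lemma should therefore read $s\in(0,1/2)$; the endpoint $s=0$ is never used in the later application (Theorem~\ref{thm: DKG sph} needs only $s=r\in(1/4,1/2)$), so this is a harmless slip in the statement. With that correction, your proof matches the paper's and you should stop at the end of your second paragraph.
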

\begin{proof}
We apply theorem \ref{thm: wave full} with $\pair=(\infty, 2)$, $\pairt=(\tilde q, \infty)$, $\tilde q > 2$, $\sigma_1=s$, and $\sigma_2=0$.
\end{proof}

\begin{thm} \label{thm: DKG sph}
Consider the IVP for the DKG system \eqref{eq: DKG 1}, \eqref{eq: DKG 2}, with $m=0$,
for initial data in the class  $\psi_{|t=0} =\psi_0 \in L^2$, $\phi_{|t=0}=\phi_0 \in H^r$ and  $\partial_t \phi_{|t=0} =\phi_1 \in H^{r-1}$, where $1/4 < r< 1/2$ and $\psi_0$, $\phi_0$, and $\phi_1$ are spherically symmetric. Then there exist a spherically symmetric solution
\[
  \psi \in C((0, \infty); L^2), \quad  \phi \in C((0, \infty); H^r) \cap C^1((0, \infty); H^{r-1}),
\]
of the DKG system \eqref{eq: DKG 1}, \eqref{eq: DKG 2}  on $(0, \infty) \times \R^2$, satisfying the initial condition above. Moreover, the solution is unique in this class, and depends continuously on the data.
\end{thm}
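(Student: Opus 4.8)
The plan is to upgrade the local-in-time existence result of D'Ancona--Foschi--Selberg (Theorem~\ref{thm: DKG}) to a global-in-time statement for spherically symmetric data by deriving an a priori bound that prevents the relevant norms from blowing up in finite time. First I would invoke the conservation of the $L^2$-norm of the spinor: multiplying \eqref{eq: DKG 1} by $\bar\psi$, taking real parts, and integrating in space, the contributions of the Dirac matrices $\sigma_1,\sigma_2,\sigma_3$ and of the nonlinearity $i\phi\sigma_3\psi$ all vanish (the latter because $\phi$ is real and $\sigma_3$ self-adjoint), so $\|\psi(t)\|_{L^2}=\|\psi_0\|_{L^2}$ for all times of existence. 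This is the only coercive quantity available, exactly the structural feature emphasized in the introduction. I would then note that the blow-up criterion coming from the local theory is governed by the growth of $\|\psi(t)\|_{L^2}+\|\phi(t)\|_{H^r}+\|\partial_t\phi(t)\|_{H^{r-1}}$; since the first piece is constant, it suffices to bound the Klein--Gordon energy on any finite interval $[0,T]$.

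Next I would control $\phi$. With $m=0$, the field $\phi$ solves $\Box\phi=\langle\sigma_3\psi,\psi\rangle$, whose source term lies in $L^\infty_t L^1_x$ on any time interval because $\|\langle\sigma_3\psi,\psi\rangle(t)\|_{L^1_x}\le\|\psi(t)\|_{L^2_x}^2=\|\psi_0\|_{L^2}^2$. This is precisely the situation covered by Lemma~\ref{lem: DKG}: applying it with $s=r\in(1/4,1/2)$ and $1/\tilde q = r$, and using that the spherically symmetric structure is preserved (so $F=\langle\sigma_3\psi,\psi\rangle$ is radial whenever $\psi$ is radial in the sense of \eqref{eq: spin sym}), one obtains
\[
 \|D^r\phi\|_{L^\infty_t([0,T];L^2_x)} \lesssim_T \|\phi_0\|_{\dot H^r}+\|\phi_1\|_{\dot H^{r-1}}+\|F\|_{L^{\tilde q'}_t([0,T];L^1_x)} \lesssim_T 1+\|\psi_0\|_{L^2}^2,
\]
with an analogous bound for $\|D^{r-1}\partial_t\phi\|_{L^\infty_t([0,T];L^2_x)}$ coming from differentiating the Duhamel representation. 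Here the radial inhomogeneous Strichartz estimate is doing the essential work: in two space dimensions the endpoint $L^\infty_t L^1_x\to L^\infty_t L^2_x$ estimate with a derivative gain of $r<1/2$ is false without spherical symmetry, and it is exactly what lets the $L^1_x$-bound on the quadratic source be absorbed. One should also restore the homogeneous Sobolev norms to inhomogeneous ones using that on a finite time interval, starting from $H^r$ data, low frequencies remain controlled.

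Finally I would assemble these pieces into a continuation argument: on the maximal interval of existence $[0,T^*)$ the quantity $\|\psi(t)\|_{L^2}+\|\phi(t)\|_{H^r}+\|\partial_t\phi(t)\|_{H^{r-1}}$ stays bounded by a constant depending only on the data and on $T^*$ by the estimates above, so if $T^*<\infty$ the local theory of Theorem~\ref{thm: DKG} can be reapplied at a time close to $T^*$ to extend the solution beyond $T^*$, a contradiction; hence $T^*=\infty$. Uniqueness and continuous dependence are inherited on each compact subinterval from Theorem~\ref{thm: DKG}, and preservation of spherical symmetry follows from the uniqueness statement together with the form-covariance of the system under $R(\varphi)$ paired with the spinor action $S(\varphi)$, which is the content of the lemma on invariance of radial solutions. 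The main obstacle is the second step: one must check carefully that Lemma~\ref{lem: DKG} applies with a constant that is uniform as long as $T$ stays bounded (so that the implicit $\lesssim_T$ does not itself degenerate), that the bilinear term $\langle\sigma_3\psi,\psi\rangle$ genuinely inherits the spherical symmetry \eqref{eq: spin sym} of $\psi$ (it does, since $S(\varphi)$ is unitary), and that the homogeneous-to-inhomogeneous Sobolev transition at low frequencies does not cost anything that cannot be re-absorbed; none of these is deep, but they are where the argument must be made honest.
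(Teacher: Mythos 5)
Your proposal is correct and is essentially the paper's own proof: the paper merely states that charge conservation $\|\psi(t)\|_{L^2}=\|\psi_0\|_{L^2}$, Theorem~\ref{thm: DKG}, and Lemma~\ref{lem: DKG} combine by ``standard arguments,'' and your write-up is precisely the unpacking of those arguments — using conservation to make the quadratic source $\langle\sigma_3\psi,\psi\rangle$ uniformly $L^\infty_t L^1_x$, feeding it into the radial inhomogeneous Strichartz estimate of Lemma~\ref{lem: DKG} to control $(\phi,\partial_t\phi)$ in $H^r\times H^{r-1}$ on any $[0,T]$, and closing with a blow-up/continuation argument plus uniqueness for preservation of the radial structure. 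The points you flag to check (polynomial-in-$T$ constants, radiality of the bilinear source via unitarity of $S(\varphi)$, low-frequency control when passing from homogeneous to inhomogeneous Sobolev norms) are exactly the right ones and none presents an obstruction.
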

\begin{proof}
The fundamental conserved property of the system is the charge estimate
\[
 \normP{\psi(t)}{2} = \normP{\psi_0}{2}.
\]
Using this, the proof follows by standard arguments from theorem \ref{thm: DKG} and lemma \ref{lem: DKG}.
\end{proof}

\nocite{Bou2}
\bibliographystyle{plain}
\bibliography{Dirac}
\end{document}